\DeclareFontFamily{U}{tipa}{}
\DeclareFontShape{U}{tipa}{m}{n}{<->tipa10}{}
\newcommand{\arc@char}{{\usefont{U}{tipa}{m}{n}\symbol{62}}}%
\newcommand{\arc}[1]{\mathpalette\arc@arc{#1}}
\newcommand{\arc@arc}[2]{%
  \sbox0{$\m@th#1#2$}%
  \vbox{
    \hbox{\resizebox{\wd0}{\height}{\arc@char}}
    \nointerlineskip
    \box0
  }%
}
\DeclareMathOperator{\per}{per}
\DeclareMathOperator{\length}{length}
\DeclareMathOperator{\diam}{diam}
\newtheorem*{def*}{Definition}
\newtheorem*{rem*}{Remark}
\newtheorem*{cor*}{Corollary}
\newtheorem{lem}{Lemma}
\newtheorem*{lem1'}{Lemma $\mathbf{1^\prime}$}
\newtheorem{theorem}{Theorem}
\newtheorem{lemma}{Lemma}
\newtheorem{corollary}[theorem]{Corollary}
\newtheorem*{conjecture*}{Conjecture}
\theoremstyle{definition}
\theoremstyle{remark}
\newcommand{\bt}{\begin{theo}}
\newcommand{\et}{\end{theo}}
\newcommand{\bl}{\begin{lem}}
\newcommand{\el}{\end{lem}}
\newcommand{\bc}{\begin{cor*}}
\newcommand{\ec}{\end{cor*}}
\newcommand{\br}{\begin{rem*}}
\newcommand{\er}{\end{rem*}}
\newcommand{\bp}{\begin{proof}}
\newcommand{\ep}{\end{proof}}
\newcommand{\bes}{\begin{ex}}
\newcommand{\ees}{\end{ex}}
\newcounter{fig}
\newcommand{\f}{\refstepcounter{fig} Figure \arabic{fig}. }
\begin{document}

\ifpdf
\DeclareGraphicsExtensions{.pdf, .jpg, .tif, .mps}
\else
\DeclareGraphicsExtensions{.eps, .jpg, }
\fi

\title[On the Lengths of Curves Passing through Boundary Points]{On the Lengths of Curves Passing through Boundary Points of a Planar Convex Shape}

\begin{abstract}
	We study the lengths of curves passing through a fixed number of points on the boundary of a convex shape in the plane.
	We show that for any convex shape $K$, there exist four points on the boundary of $K$ such that the length of any curve passing through these points is at least half of the perimeter of $K$.
	It is also shown that the same statement does not remain valid with the additional constraint that the points are extreme points of~$K$.
	Moreover, the factor $\frac12$ cannot be achieved with any fixed number of extreme points. We conclude the paper with few other inequalities related to the perimeter of a convex shape.
 \end{abstract}
	

\author{Arseniy~Akopyan}
\address{Arseniy~Akopyan, Institute of Science and Technology Austria (IST Austria), Am Campus~1, 3400 Klosterneuburg, Austria}
\email{akopjan@gmail.com}

\author{Vladislav~Vysotsky}
\address{Vladislav~Vysotsky, Imperial College London, St.\ Petersburg Department of Steklov Mathematical Institute}
\email{v.vysotskiy@imperial.ac.uk, vysotsky@pdmi.ras.ru}

\thanks{The work of A.A. is supported by People Programme (Marie Curie Actions) of the European Union's Seventh Framework Programme (FP7/2007-2013) under REA grant agreement n$^\circ$[291734].
	The work of V.V. is supported by People Programme (Marie Curie Actions) of the European Union's Seventh Framework Programme (FP7/2007-2013) under REA grant agreement n$^\circ$[628803].}

\subjclass[2000]{Primary 52A10; Secondary 52A40, 52A38}
\keywords{convex shape, perimeter, diameter, extreme points, geometric inequality, upper bound for perimeter}

\maketitle

\subsection*{Introduction}
We study the lengths of curves passing through a fixed number of points on the boundary of a convex shape in the plane. 
All the curves considered are supposed to be rectifiable, i.e., have finite length.
By convex shapes we mean compact convex subsets of the plane, and we assume that a convex shape has a non-empty interior to avoid trivialities.

We first show that for any convex shape $K$, there exist four points on the boundary of $K$ such that the length of any curve passing through these points is at least half of the perimeter of~$K$; see Theorem~\ref{thm:four points on boundary}. It turns out that this statement is optimal: the lower bound $\frac12 \per K$ (where per $K$ denotes the perimeter of $K$) cannot hold for three points and we cannot exceed the factor $\frac12$ even by increasing the number of points.
Moreover, if we additionally require that these boundary points are extreme,
then it does not suffice to take four points and, in fact, the factor $\frac12$ cannot be achieved with any fixed number of extreme points; see Theorem~\ref{thm:no approximation}.
By convention, in the statements concerning a number of extreme points we do not require that the points be distinct; for example, we can choose five extreme points in a triangle.

We conclude the paper by considering curves whose convex hulls cover $K$, and, in particular, curves that pass through all extreme points of $K$. It is well-known that the length of such curves is at least $\frac12 \per K$, which explains the factor $\frac12$ appearing above. This consideration is related to the question of H.T.~Croft \cite{croft1969curves} on the minimal length of a curve such that its convex hull contains a unit disk. 

Our results can be regarded as upper estimates for the perimeter of a convex shape. The approach presented was motivated by our studies \cite{AkopyanVysotskyProbab} of a problem in probability theory concerning the trajectories of planar random walks whose convex hulls have atypically large perimeter. 


A similar question was considered by A.~Zirakzadeh~\cite{zirakzadeh1964aproperty}, who proved that any triangle with its vertices dividing the boundary of a convex shape $K$ into three arcs of equal lengths has perimeter at least $\frac{1}{2}\per K$. This was extended  by B.~Bollob{\'a}s \cite{bolobas1967anextremal} who proved that the perimeter of any inscribed $n$-gon with its vertices dividing the boundary of $K$ into $n$ equal arcs is at least $(1-2/n)\per K$, which is a tight bound for even $n$.


The related works of A.~Glazyrin and F.~Mori{\'c}~\cite{GlazyrinMoric}, Z.~L\'angi \cite{langi2013ontheperimeters}, and R.~Pinchasi~\cite{pinchasi2015ontheperimeter} concern inequalities for the perimeters of a convex body and one or several disjoint polygons covered by the body. There is an impressive survey by P.~Scott and P.~W.~Awyong~\cite{ScottAwyong} of inequalities relating the perimeter and other characteristics (area, width, etc.) of convex shapes.

One may consider the results presented in the context of the open tour Euclidean traveling salesman problem of finding the shortest path connecting certain points in the plane.
This problem is usually considered in the asymptotic setting for large numbers of points.
Thus, L.~Fejes T\'oth \cite{fejes1940ubereinen} posed the question of finding the asymptotics of the length of this path through $n$ points in a unit square, and proved the lower bound $(4/3)^{1/4}\sqrt n+o(\sqrt n)$.
The best current upper bound $1.391 \sqrt n+o(\sqrt n)$ was obtained by H.~J.~Karloff~\cite{karloff1989howlong}. A probabilistic version of this problem was studied by J.~Beardwood, J.~H.~Halton, and J.~M.~Hammersley~\cite{beardwood1952theshortest}, who considered points in an arbitrary body in any dimension.


%

\subsection*{Curves through four points on the boundary}
For any convex shape $K$ and any number $\varepsilon>0$, one can choose several points on the boundary $\partial K$ of $K$ such that the length of any curve passing through these points is at least $(1-\varepsilon) \per K$; we leave this statement without a proof.
The number of points required for this approximation depends on both $K$ and $\varepsilon$.
It is clear that we cannot control $\varepsilon$ (uniformly in $K$) by increasing the number of points.
Indeed, fix $n$ and consider a thin $1\times L$ rectangle with $L\gg n$. Any $n$ points on its boundary can be connected by a curve of length exactly $L+n \approx \frac{1}{2} \per P$ (see Figure~\ref{fig: long qudrileteral}). Actually, $\varepsilon=1/2$ is the threshold.

\begin{center}
	\includegraphics{fig-poly-9}
	
	\f 	\label{fig: long qudrileteral}
	An $1\times L$ rectangle with a path of length $L+n$ through $n$ points.
\end{center}

\begin{theorem}
	\label{thm:four points on boundary}
	Let $K$ be a compact convex shape in the plane. There exist four points on the boundary of $K$ such that the length of any path connecting these points is at least $\frac 12 \per K$. 
\end{theorem}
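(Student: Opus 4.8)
The plan is to reduce the theorem to a combinatorial inequality about inscribed quadrilaterals and then to take the four points dividing $\partial K$ into four arcs of equal length.

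First I would pass from arbitrary curves to polygonal paths. Any rectifiable curve through the four points meets them for the first time in some order $q_1,q_2,q_3,q_4$; the sub-arcs between consecutive first visits are disjoint and each is at least as long as the corresponding chord, so the curve is at least as long as the open polygonal tour $q_1q_2q_3q_4$. Hence it suffices to produce four boundary points for which the shortest open tour has length at least $\frac12\per K$. Labelling the points $p_1,p_2,p_3,p_4$ in cyclic order, with sides $a=|p_1p_2|$, $b=|p_2p_3|$, $c=|p_3p_4|$, $d=|p_4p_1|$, diagonals $e=|p_1p_3|$, $f=|p_2p_4|$, perimeter $P=a+b+c+d$ and longest side $M$, an uncrossing argument (a crossing pair of edges can be swapped for a shorter non-crossing pair) shows the optimal tour is non-crossing; enumerating the eight non-crossing tours gives the shortest length
\[
 L^\ast=\min\Big(P-M,\ \min(a+c,b+d)+\min(e,f)\Big),
\]
so the theorem reduces to finding four points with $L^\ast\ge\frac12\per K$.

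I would then choose $p_1,\dots,p_4$ to cut $\partial K$ into four equal arcs of length $\frac14\per K$; note that both diagonals then bisect the perimeter. The thin $1\times L$ rectangle and very flat triangles show this choice to be sharp: in each the equal-arc quadrilateral gives $L^\ast$ exactly $\frac12\per K$, the binding tour being the one through a short diagonal, i.e.\ the term $\min(a+c,b+d)+\min(e,f)$. With this choice the argument splits into two inequalities, (1) $P-M\ge\frac12\per K$ and (2) $\min(a+c,b+d)+\min(e,f)\ge\frac12\per K$. Inequality (1) is the easier half: $P-M$ is exactly the length of the inscribed three-chord path whose vertices cut an arc of length $\frac34\per K$ into three equal sub-arcs, and an arc analogue of the Zirakzadeh--Bollob\'as equal-arc inequality bounds this below by $\frac12\per K$ with room to spare in the extremal shapes.

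The main obstacle is inequality (2), where equality holds and the short diagonal must be controlled precisely. Here I would combine Cauchy's formula $\per K=\int_0^\pi w_K(\theta)\,d\theta$ with the elementary identity that, for any polygonal path, twice its length equals $\int_0^\pi(\text{length of its projection onto direction }\theta,\text{ counted with multiplicity})\,d\theta$. Inequality (2) then amounts to showing that the short-diagonal tour, whose projection backtracks precisely when the projected order of the four points is non-monotone, covers on average at least the full width $w_K(\theta)$; the extra coverage forced by the backtracking must make up exactly for the directions in which the four projected points fail to span $K$. Turning the equal-arc condition into this sharp backtracking estimate — the step that produces the optimal constant $\frac12$ and where the thin-rectangle equality case lives — is the crux of the proof.
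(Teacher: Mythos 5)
Your reduction from curves to polygonal tours and the formula $L^\ast=\min\bigl(P-M,\ \min(a+c,b+d)+\min(e,f)\bigr)$ over the eight non-crossing tours are both fine, but the strategy breaks at the very next step: the equal-arc prescription does not work. First, it does not determine the four points --- the starting point can slide along the boundary, giving a one-parameter family of equal-arc quadrilaterals --- and, more seriously, some members of this family genuinely violate the conclusion. Take the $1\times L$ rectangle, so $\per K=2L+2$, and the equal-arc quadrilateral with vertices $((L-1)/4,0)$, $((3L+1)/4,0)$, $((3L+1)/4,1)$, $((L-1)/4,1)$: each boundary arc between consecutive vertices has length $(L+1)/2$, yet these points are the vertices of an inscribed $\frac{L+1}{2}\times 1$ rectangle, and the path along its two short sides and one long side has length $(L+5)/2$, which is below $\frac12\per K=L+1$ as soon as $L>3$ (and approaches a quarter of $\per K$ as $L\to\infty$). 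So your inequality (1), $P-M\ge\frac12\per K$, is false for equal-arc quadrilaterals; it is not ``the easier half.'' The appeal to Zirakzadeh--Bollob\'as cannot rescue it: those results bound the \emph{closed} perimeter $P$ of the inscribed polygon (here $P=L+3$, barely above $L+1$), and discarding the longest side destroys the bound. For the thin rectangle the only equal-arc quadrilateral that works is the one through the four edge midpoints, a rhombus for which $\min(a+c,b+d)+\min(e,f)=\sqrt{L^2+1}+1>L+1$; so you would need both a rule selecting the correct starting point and a proof adapted to that selection, and the proposal contains neither.

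Second, even granting a good configuration, your inequality (2) is explicitly left unproven --- you call it ``the crux'' --- so the proposal is a plan rather than a proof, and the sketched projection/backtracking argument gives no indication of how the equal-arc hypothesis would enter quantitatively. For comparison, the paper avoids equal arcs entirely: it takes $a,b$ to be the endpoints of a diameter of $K$ and $c,d$ the intersections of the perpendicular bisector of $ab$ with $\partial K$. The key lemma (Theorem~\ref{thm:double perimeter}) is $\per K<2|ab|+2|cd|$, proved by comparing $\partial K$ with a curve built from two circular arcs of radius $|ab|$ and a support line at $c$; this immediately handles the case where the shortest tour is $acdb$, and the remaining case (shortest tour $cadb$) is finished with the crude bound $\per K\le\pi\diam K$. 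That choice of points builds the needed inequality directly into the configuration --- precisely what the equal-arc choice fails to do.
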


	\begin{center}
		\parbox{7cm}{
		\begin{center}\vskip 0.1cm
			\includegraphics{fig-poly-4}
\vskip 0.1cm
			\f \label{fig: three points figures}
			The optimal configuration of three points in a thin lens.
		\end{center}}\,\,
		\parbox{7cm}{
	\begin{center}
			\includegraphics{fig-poly-411}
			
			\f \label{fig: another three points figures}		
			A non-optimal configuration of three points in a thin lens.
		\end{center}}
	\end{center}
	
	It will not suffice to take three points. Indeed, consider a thin lens formed by two identical circular segments; see Figure~\ref{fig: three points figures}. 
	Let us show that the triple $\{a,m,b\}$ consisting of the endpoints $a$ and $b$ and the midpoint $m$ of either of the arcs maximizes the length of the shortest path through any triple of points on the boundary of the lens. Clearly, the length of the path $amb$ is less than the length of the arc.

Consider a triple of points $\{a',m', b'\}$ on the boundary of the lens, which are denoted in the order of increasing $x$-coordinates.
Since $|a a'| \le |a m'|$ and $\angle a'am'<60^\circ$ if the lens is thin enough, we see that $a'm'$ is not a longest side in the triangle $aa'm'$, and therefore $|am'|\ge|a'm'|$ (and equality holds only if $a'$ coincides with $a$). By the same argument, we have $|m'b|\ge|m'b'|$. Hence the path $a'm'b'$ (which is not required to be a shortest path connecting $a'$, $m'$, $b'$) is not longer than $am'b$. It is left to note that the path $am'b$ has maximal length if and only if $m'$ is the midpoint of one of the arcs (apply Lemma~\ref{lem:triangle fixed angle} for the angle $\varphi = \angle am'b$, which does not depend on $m'$). 
	
	In the proof of Theorem \ref{thm:four points on boundary} we will use the following statement, which, in our opinion, is interesting by itself.


\begin{theorem}
	\label{thm:double perimeter}
	Let $K$ be a compact convex shape in the plane, and let $ab$ be one of its diameters.
	Suppose that the perpendicular bisector of $ab$ intersects the boundary of $K$ at two points $c$ and~$d$.
	Then
	\begin{equation*}
		\per K < 2|ab|+2|cd|.
	\end{equation*}
\end{theorem}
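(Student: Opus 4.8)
The plan is to place the diameter $ab$ on the horizontal axis and the perpendicular bisector on the vertical axis, writing $a=(-r,0)$, $b=(r,0)$, $c=(0,h_c)$, $d=(0,-h_d)$ with $h_c,h_d>0$, so that $|ab|=2r$ and $|cd|=h_c+h_d$. First I would record the standard consequences of $ab$ being a diameter: the lines through $a$ and $b$ perpendicular to $ab$ support $K$ (otherwise a point beyond one of them would be farther than $|ab|$ from the opposite endpoint), so $K$ lies in the vertical strip $-r\le x\le r$, and $\partial K$ splits into an upper arc that is the graph of a concave function on $[-r,r]$ and a lower arc that is the graph of a convex function, the two meeting at $a$ and $b$. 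The line $x=0$ meets these graphs exactly at $c$ and $d$, so $\per K=\length(U)+\length(L)$, where $U$ is the upper arc from $a$ through $c$ to $b$ and $L$ the lower arc from $a$ through $d$ to $b$.

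I would then reduce the theorem to the single-arc estimate
\[
\length(U)\ \le\ |ab|+2\,\dist(c,ab),
\]
together with its mirror $\length(L)\le |ab|+2\,\dist(d,ab)$ for the lower arc; since $\dist(c,ab)=h_c$ and $\dist(d,ab)=h_d$, adding the two yields $\per K\le 2|ab|+2|cd|$, and the inequality is strict because equality in the single-arc bound is only approached in a degenerate (flat) limit and is never attained by a body with non-empty interior. It is worth stressing that this single-arc bound genuinely needs the diameter hypothesis: for an arbitrary concave cap through three points one can make the peak tall and off-centre so that $\length(U)$ exceeds $|ab|+2h_c$. What rules this out is precisely that every point of $U$ must stay within distance $|ab|$ of both $a$ and $b$.

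To prove the single-arc bound I would exploit that $U$ lies below the two circular arcs of radius $|ab|$ centred at $a$ and at $b$. The heuristic reason the constant is right is a projection argument: rotating so that a supporting line of $K$ at $c$ becomes horizontal makes $c$ the highest point of $U$, and projecting the two resulting monotone sub-arcs onto the rotated axes gives a vertical contribution of exactly $2h_c\cos\varphi$ and a horizontal contribution of at most $|ab|\cos\varphi$, with sum at most $|ab|+2h_c$. The difficulty is that this crude projection over-counts the horizontal travel whenever the top of $K$ bulges past $b$ in the tilted frame, and near the extremal configurations the over-count is too large. I would therefore replace the naive projection by a comparison in which $U$ is enclosed, using monotonicity of perimeter under inclusion, beneath the concave curve obtained as the lower envelope of the circle about $b$, the supporting line at $c$, and the circle about $a$; this envelope lies above $U$ because $U$ is below $\partial D_b$ for $x\le 0$, below its tangent at $c$, and below $\partial D_a$ for $x\ge 0$. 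One then bounds $\length(U)$ by the length of this explicit segment-and-arc curve and maximises over the slope of the supporting line at $c$.

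The main obstacle I anticipate is exactly this tight regime. The estimate is sharp: as the apex of an inscribed triangle slides along the circle of radius $|ab|$ about $a$ towards $b$ (with $d$ tending to the midpoint of $ab$), both sides of the single-arc bound converge to the same limit, so any slack in the comparison curve must disappear in the limit. Controlling the comparison-curve length uniformly through this degeneration---equivalently, checking that the worst case among the admissible segment-and-circular-arc extremals never exceeds $|ab|+2h_c$---is where the real work lies, and it is what forces one to use the full strength of the diameter condition rather than mere containment in the strip.
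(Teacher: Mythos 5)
Your reduction is exactly the paper's: split $\partial K$ along the diameter into the arc through $c$ and the arc through $d$, prove the single-arc bound $\length(K_c)\le |ab|+2\dist(c,ab)$, and obtain it by comparing $K_c$ with the convex curve built from the support line at $c$ and the two circular arcs of radius $|ab|$ centred at $a$ and $b$. The problem is that you stop precisely where the proof has to start. The assertion that this segment-and-arcs envelope has length at most $|ab|+2\dist(c,ab)$ \emph{is} the theorem; you never establish it, you only announce that one ``maximises over the slope of the supporting line at $c$'' and then concede in your last paragraph that controlling this quantity through the degenerate regime ``is where the real work lies.'' This is a genuine gap, not a presentational one: as you correctly observe, the bound becomes asymptotically tight when the support line at $c$ tends to the tangent configuration near $b$, so no argument with a uniform margin of slack (compactness, crude projections, generous constants) can close it --- one needs an exact comparison whose error vanishes in that limit. (A separate, minor, gap: your coordinate setup assumes $h_c,h_d>0$, but one of $c,d$ may lie on the segment $ab$, e.g.\ for a half-disk; the paper notes that at least one of them is off $ab$ and runs the arc estimate only for that one.)

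For the record, here is how the paper finishes, with no optimisation over slopes at all. Let the support line at $c$ meet the circle centred at $a$ at $q$ and the circle centred at $b$ at $p$, with $q$ the farther point from $ab$, and let $p_1,q_1$ be its intersections with the perpendiculars to $ab$ at $a$ and at $b$, so that $|ap_1|+|bq_1|=2|co|$ because $c$ lies on the perpendicular bisector. Draw the line through $a$ parallel to $pq$; it meets the arc $qb$ (centred at $a$) at $l$ and the segment $q_1b$ at $m$, and one takes $k$ on $p_1q_1$ making $ap_1kl$ and $lkq_1m$ parallelograms. Monotonicity of length for nested convex curves gives $\length(apql)<|ap_1|+|p_1k|+|kl|=|ap_1|+|ab|+|q_1m|$, and the remaining circular arc from $l$ to $b$ is dominated by the segment $bm$, since that arc has length $|ab|\cdot\angle bal$ while $|bm|=|ab|\tan(\angle bal)$ --- i.e.\ the whole estimate rests on $\tan x\ge x$, whose slack vanishes as $x\to 0$, which is exactly the feature that survives the tight regime you were worried about. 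Adding the two bounds gives $\length(apqb)<|ab|+|ap_1|+|bq_1|=|ab|+2|co|$. Until you supply a computation of this kind (or an honest explicit maximisation of the envelope length over the slope), your proposal is a correct plan with the central estimate missing.
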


\begin{proof}
	The diameter $ab$ divides the boundary of $K$ into two parts $K_c$ and $K_d$ that contain the points $c$ and $d$, respectively. 	We will show that if $c \notin ab$, then the length of $K_c$ is greater than $|ab|+2|co|$, where $o$ is the midpoint of $ab$. Since at least one of the points $c$ and $d$ does not lie on $ab$, this inequality with the analogous inequality for $K_d$ implies the statement of the theorem.


	\begin{center}
		\includegraphics{fig-poly-1}
		\item 
		\f \label{fig: circles}
		Illustration for the proof of Theorem~\ref{thm:double perimeter}.
	\end{center}
	
	Let us construct two circles of radius $ab$ centered at $a$ and $b$; see Figure~\ref{fig: circles}. 
	It is clear that $K$ lies in the intersection of the corresponding disks. We draw a support line to $K$ at the point~$c$ and denote its intersection points with the circles centered at $a$ and $b$ by $q$ and~$p$, respectively. Without loss of generality we assume that $q$ is farther from (not closer to) $ab$ than~$p$. Then $K_c$ lies inside the region bounded by the closed convex curve $apqba$ ($ap$ and~$qb$ are circular arcs), hence the length of $K_c$ does not exceed the length of $apqb$. We seek to bound the length of $apqb$.
	
	Denote by $p_1$ and $q_1$ the points of intersection of $pq$ with the respective perpendiculars to~$ab$ at the points $a$ and $b$; see Figure~\ref{fig: circles}.
	Since $2|co|=|p_1a|+|q_1b|$, it suffices to show that the length of $apqb$ is less than $|ab|+|p_1a|+|q_1b|$. 
	
	The line through the point $a$ parallel to $pq$ intersects the arc $qb$ and the segment $q_1b$ at the points $l$ and $m$, respectively. Denote by $k$  the point on $p_1q_1$ such that the quadrilaterals $ap_1kl$ and $lkq_1m$ are parallelograms.
	
	The length of the arc $lb$ satisfies $|\arc{\it lb}| = |ab| \cdot \angle(bal)$ and we have that $|bm|=|ab|\tan(\angle bal)$.
	Hence, since $\tan(x)\geq x$, we have
	\begin{equation}
		\label{eq:tangent inequality}
		|bm|\geq |\arc{\it lb}|.
	\end{equation}
	
	Since the closed curve $apqla$ is convex, the length of $apql$ is less than the length of the path $ap_1kl$.  	Since  $|kl|=|p_1a|=|q_1m|$ and $|p_1k|=|al|=|ab|$, we have
	\begin{equation}
		\label{eq:convex inequality}
		\length{(apql)}<|ap_1|+|q_1m|+|ab|.
	\end{equation}
	Combining \eqref{eq:tangent inequality} and \eqref{eq:convex inequality}, we obtain the required inequality for the length of the path $apqb$.
\end{proof}

\begin{proof}[Proof of Theorem \ref{thm:four points on boundary}]
	As in Theorem~\ref{thm:double perimeter}, denote by $ab$ a diameter of $K$, by $o$ the midpoint of $ab$, and by $c$ and $d$ the points of intersection of the perpendicular bisector of $ab$ with the boundary of $K$; see Figure~\ref{fig: four points figures}. Without loss of generality we can assume that $|co|\geq|do|$ and $|ao|=1$.
	
	\begin{center}
		\includegraphics{fig-poly-2}
		
		\f 	\label{fig: four points figures}
		Illustration for the proof of Theorem~\ref{thm:four points on boundary}.
	\end{center}
	
	It is clear that the shortest path connecting the points $a$, $b$, $c$, $d$ is $acdb$ (if $|ad|\geq|cd|$) or $cadb$ (if $|ad|<|cd|$). In the former case, the length of the path is greater than $|ab|+|cd|$, which exceeds $\frac12 \per K$ by Theorem~\ref{thm:double perimeter}.
	
	In the latter case, we note that $|cd|>|ad|\geq 1$. For a fixed $|cd|$, $|ad|+|ac|$ attains its minimum value when $acd$ is an isosceles triangle.
	Therefore,  $|ad|+|ac|>\sqrt{5}>2.2$. Since $|ad|\geq 1$, we conclude that  the length of the path $cadb$ is at least $3.2>\pi$.
	It remains to use the fact that the perimeter of a convex set of diameter $2$ is at most $2\pi$. (This follows from the Crofton formula \eqref{eq: Crofton}.)
\end{proof}

\subsection*{Curves through extreme points.}
We now consider curves that are required to pass through extreme points of convex shapes.
Let us recall that a point of a convex shape $K$ is called \emph{extreme} if it does not belong to any open line segment with end points in $K$. In the case that $K$ is a convex polygon, the set of its extreme points coincides with its vertices.

\begin{theorem}
	\label{thm:no approximation}
	For any $n \ge 2$, there exists a convex shape $K_n$ such that any $n$ extreme points of $K_n$ can be connected by a path of length less than $\frac12 \per K_n$.
\end{theorem}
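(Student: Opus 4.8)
The plan is to exhibit, for each $n$, an explicit convex shape $K_n$ and to prove the equivalent (stronger) statement that $\max_S \ell(S) < \tfrac12 \per K_n$, where the maximum ranges over all $n$-element subsets $S$ of extreme points of $K_n$ and $\ell(S)$ denotes the length of the shortest path passing through $S$. Note that for three points the thin lens already does the job: the extremal triple is $\{a,m,b\}$, its shortest path is the polygonal line $amb$ made of two chords, and each chord is strictly shorter than the arc it subtends, so $\ell(\{a,m,b\})<$ (one arc) $=\tfrac12\per$. The whole task is to arrange $K_n$ so that this chord-versus-arc mechanism survives for $n$ points.

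First I would fix a thin, doubly symmetric convex shape $K_n$ whose curvature profile and aspect ratio depend on $n$, with the extreme points confined to controlled boundary arcs. The argument would then proceed in three steps. (i) \emph{Reduce to an extremal configuration.} Using the symmetry of $K_n$ together with an exchange/monotonicity argument in the spirit of the inequalities $|am'|\ge |a'm'|$ and $|m'b|\ge |m'b'|$ used for the lens, push an arbitrary $n$-tuple of extreme points to a canonical symmetric position that can only increase $\ell$. (ii) \emph{Bound the extremal path.} Write the shortest path of the canonical configuration as a concatenation of chords and estimate each chord against the boundary arc it subtends, using convexity together with the tangent inequality $\tan x\ge x$ already exploited in Theorem~\ref{thm:double perimeter} (this is where Lemma~\ref{lem:triangle fixed angle} would enter, via the fixed subtended angle $\varphi$). (iii) \emph{Account for the perimeter.} Verify that the arcs appearing in step (ii) are pairwise disjoint and together make up at most one half of $\partial K_n$, so their total length is at most $\tfrac12\per K_n$; since each chord is strictly shorter than its arc, the strict inequality follows.

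The hard part is step (ii), and more precisely controlling the interior chords of the extremal path that do \emph{not} subtend any boundary arc. This is exactly the phenomenon that defeats the naive choice of a thin lens with circular arcs already at $n=4$: there the worst configuration is the rhombus formed by the two tips and the two opposite bulges, and its shortest path must cross once from one side to the other, contributing a term linear in the width of the lens, whereas the perimeter exceeds twice the diameter only by a term quadratic in the width; the linear term wins and $\ell$ slips just above $\tfrac12\per$. Consequently $K_n$ must be designed so that the extremal path stays monotone along one side and never pays such a transversal cost — equivalently, so that the perimeter surplus dominates every unavoidable crossing segment. Choosing a profile with this property simultaneously for every $n$, and rigorously proving that the candidate symmetric configuration is the genuine maximizer of $\ell$, is the main obstacle; once the right shape is fixed, the remaining estimates reduce to routine chord-versus-arc comparisons.
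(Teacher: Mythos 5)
Your proposal is a programme rather than a proof: the one object the theorem asks for, the shape $K_n$, is never produced, and you yourself flag its construction (and the verification that your symmetric configuration maximizes $\ell$) as ``the main obstacle.'' That obstacle \emph{is} the theorem. A smaller structural remark: step (i) is an unnecessary detour. Since the statement only asks for \emph{some} short path through the chosen points, there is no need to identify an extremal configuration or a shortest path; it suffices to bound, for an \emph{arbitrary} $n$-tuple of extreme points, the length of the polygonal path visiting them in boundary order, which is what the paper does.

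The more serious problem is that the mechanism of your steps (ii)--(iii) --- each chord is shorter than the boundary arc it subtends, and the subtended arcs total at most $\frac12\per K_n$ --- cannot work for \emph{any} convex shape with nonempty interior, no matter how the profile is designed. For the subtended arcs to total at most $\frac12\per K_n$, the complementary arc (joining the last visited point back to the first, with none of the chosen points in its interior) must have length at least $\frac12\per K_n$. For this to survive every adversarial choice of the points, that long arc must contain no extreme points in its interior at all: with $n\ge 3$ one can otherwise choose the two endpoints of a diameter together with a suitable extreme point on the longer of the two arcs they cut off, making every gap shorter than $\frac12\per K_n$. But a boundary arc whose interior is free of extreme points is a straight segment, and a straight segment in the boundary of a convex shape with nonempty interior is strictly shorter than the complementary arc, hence strictly shorter than $\frac12\per K_n$. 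So the arc-domination premise of step (iii) is self-contradictory, and no choice of profile can rescue it. The paper's proof evades precisely this trap: it takes $K_n=E_k$, the \emph{half}-ellipse with semi-axes $1$ and $k$, cut along its major axis $ab$ with $|ab|=2$, so that the extreme points are exactly the points of the curved arc (your ``monotone along one side'' desideratum, obtained for free because interior points of $ab$ are not extreme), and then compares the boundary-ordered path not with the subtended arcs but with the straight chord $ab$: Lemma~\ref{lem:adding vertex} and telescoping give length at most $|ab| + 2(n-2)\left(\sqrt{1+k^2}-1\right) = 2 + O(nk^2)$, while $\frac12\per E_k = 2 + \frac12 k^2\log\frac1k + O(k^2)$ by the Cayley expansion of the ellipse perimeter. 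The quantitative engine is the logarithmic surplus $k^2\log\frac1k$, which for fixed $n$ dominates the total per-vertex penalty $O(nk^2)$; note that a chord-versus-arc comparison on this shape would only give the bound $2 + k^2\log\frac1k + O(k^2)$, which exceeds $\frac12\per E_k$ and proves nothing.
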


Since any convex shape can be approximated by a convex polygon, we obtain the following corollary.
\begin{corollary}
	\label{cor::no approximation}
	For any $n \ge 3$, there exists a convex polygon $P_n$ such that any $n$ vertices of $P_n$ can be connected by a path of length less than $\frac12 \per P_n$.
\end{corollary}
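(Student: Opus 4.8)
The plan is to obtain the Corollary directly from Theorem~\ref{thm:no approximation} by approximating the shape $K_n$ produced there by inscribed convex polygons. I will assume (as one may arrange in the construction) that $K_n$ is strictly convex, so that \emph{every} boundary point of $K_n$ is an extreme point. The first step is to upgrade the strict inequality of Theorem~\ref{thm:no approximation} to a uniform one. Since $\partial K_n$ is compact, the set of $n$-tuples of extreme points is compact, and the length of a shortest path through $n$ prescribed points depends continuously on those points; hence
\begin{equation*}
	M \;:=\; \max\bigl\{\,\text{length of a shortest path through } x_1,\dots,x_n \;:\; x_1,\dots,x_n \in \partial K_n \,\bigr\}
\end{equation*}
is attained, and Theorem~\ref{thm:no approximation} gives $M < \tfrac12\per K_n$. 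Thus there is a margin $\varepsilon := \tfrac12\per K_n - M > 0$.

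Next I would choose the polygon. Let $P$ be a convex polygon inscribed in $K_n$ with all of its vertices lying on $\partial K_n$; as the vertices are taken denser and denser one has $\per P \to \per K_n$ (from below, since $P \subseteq K_n$). Because $K_n$ is strictly convex, each vertex of $P$ is a boundary point of $K_n$ and therefore an extreme point of $K_n$. Consequently, any $n$ vertices of $P$ are $n$ extreme points of $K_n$, so the length of a shortest path connecting them is at most $M$. Fixing $P$ fine enough that $\per P > \per K_n - 2\varepsilon$, one obtains $\tfrac12\per P > \tfrac12\per K_n - \varepsilon = M$, and hence every choice of $n$ vertices of $P$ is joined by a path of length at most $M < \tfrac12\per P$. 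Taking $P_n := P$ then proves the Corollary, the vertices of $P_n$ being precisely its extreme points.

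The only points that require care — and the place I expect the main (if modest) difficulty — are structural rather than computational: one must be able to take the shape $K_n$ of Theorem~\ref{thm:no approximation} strictly convex, so that the inscribed polygon's vertices are genuine extreme points of $K_n$ and so that $M$ is a maximum over a compact family (yielding the positive margin $\varepsilon$). If the construction behind Theorem~\ref{thm:no approximation} furnishes a shape that is not strictly convex, one would first either smooth it slightly, or argue directly that the supremum of the shortest-path length over $n$-tuples of extreme points is still strictly below $\tfrac12\per K_n$ with a positive gap; the remainder of the argument, being pure approximation, then goes through unchanged.
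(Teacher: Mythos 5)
There is a genuine gap, and it sits exactly where you located the ``main (if modest) difficulty'': the shape $K_n$ of Theorem~\ref{thm:no approximation} \emph{cannot} be taken strictly convex once $n\ge 4$, and your first fallback (smoothing) fails as well. The obstruction is Theorem~\ref{thm:four points on boundary} itself: for every convex shape $K$ there exist four boundary points such that \emph{any} path through them has length at least $\frac12\per K$. In a strictly convex shape every boundary point is extreme, so those four points (repeated if necessary to make up $n$ points, as the paper's convention allows) would violate the conclusion of Theorem~\ref{thm:no approximation}. The same mechanism shows concretely why smoothing the paper's half-ellipse $E_k$ destroys the property: if the flat side $ab$ is replaced by a slightly bulging arc, its points become extreme, and taking $a$, $b$, the topmost point $c$ of the elliptic arc and the bottommost point $d$ of the new arc, the shortest path through these four points is $acdb$, of length exceeding $2+k$, which is larger than $\frac12\per E_k = 2+\frac12 k^2\log\frac1k+O(k^2)$ for small $k$. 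The flat side, whose interior points are \emph{not} extreme, is precisely what makes Theorem~\ref{thm:no approximation} possible, so no strictly convex substitute exists.

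Consequently only your second fallback can work, and it needs the two ingredients you left implicit. First, the positive margin: the maximum $M$ must be taken over $n$-tuples of \emph{extreme} points, and it is attained because in the plane the set of extreme points of a compact convex set is closed (for $E_k$ it is the closed arc $\arc{ab}$); alternatively, the paper's bound \eqref{eq:small sum of n} already provides an explicit uniform margin below $\frac12\per E_k$, with no compactness argument needed. Second, the approximation step must place \emph{all} vertices of the inscribed polygon $P$ at extreme points of $K_n$ (on the closed arc, including $a$ and $b$); a vertex of $P$ lying in the interior of the flat side would not be an extreme point of $K_n$, and $M$ would say nothing about paths through it. With the vertices so restricted one still has $\per P\to\per K_n$, and your computation $\frac12\per P>M$ finishes the proof; this corrected version is exactly the approximation argument the paper alludes to in its one-line proof. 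As written, however, your proposal commits to a branch (strict convexity) that is ruled out by the paper's own Theorem~\ref{thm:four points on boundary}, and its designated repair (smoothing) fails for the same reason.
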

\begin{proof}[Proof of Theorem~\ref{thm:no approximation}]
Let $E_k$ be a half of an elongated ellipse with semi-axes $1$ and $k$, where $k$ is sufficiently small (to be chosen later), bisected through its major axis $ab$ of length two, as shown in Figure~\ref{fig:moving point}. 
Since the set of extreme points of $E_k$ is the arc $\arc{ab}$, it suffices to prove that for any $n$ points $m_1, \dots, m_n$ on the arc enumerated in the direction from $a$ to $b$, we have
\begin{equation} \label{eq:length n <}
|m_1 m_2| + \dots + |m_{n-1} m_n| < \frac12 \per E_k.	
\end{equation}
Note that we do not require that the polygonal path $m_1m_2\dots m_n$ be the shortest among the paths connecting the points $m_i$, and 
although it is not hard to verify this statement, we will not prove it.

We use the following lemma, which is proved right after the proof of Theorem~\ref{thm:no approximation}.
\begin{lemma}
	\label{lem:adding vertex}
	For any three points $m_{i-1}$, $m_i$, $m_{i+1}$ on the arc $\arc{ab}$ 
	(such that $m_i \in \arc{m_{i-1} m_{i+1}}$),
	\begin{equation} \label{eq:small sum of 2}
	|m_{i-1} m_i|+|m_i m_{i+1}| - |m_{i-1}m_{i+1}| \leq 2 \sqrt{1 + k^2} - 2.
	\end{equation}
\end{lemma}

Applying the lemma $n-2$ times to the triples of points $m_1$, $m_i$, $m_{i+1}$, we find that the length of the polygonal line $m_1 \dots m_n$ satisfies
\begin{equation} \label{eq:small sum of n}
|m_1 m_2| + \dots + |m_{n-1} m_n| \le 2(n-2)(\sqrt{1 + k^2} - 1) +2
\end{equation}
since $|m_1 m_n| \leq |ab|$.

On the other hand, the perimeter of an elongated ellipse with semi-axes $1$ and $k$ has the asymptotics $4+ 2{k^2} \log \frac{1}{k} + O({k^2})$ as $k \to 0$, which corresponds to the first two terms of the so-called Cayley series (see \cite[Ch.~III.78]{Cayley} or~\cite[Eq.~8.114.3]{GradshteinRyzhik}). Hence
\begin{equation}
	\label{eq:half perimeter}
	\frac12 \per E_k = 2 + \frac{1}{2} k^2 \log \frac{1}{k} + O({k^2}), \qquad k \to 0.
\end{equation}

The statement of Theorem~\ref{thm:no approximation} now follows by~\eqref{eq:length n <},~\eqref{eq:small sum of n}, and~~\eqref{eq:half perimeter} since $n$ is fixed and $\log\frac{1}{ k}\to \infty $ as $k \to 0$: we choose $k$ small enough and put $K_n:=E_k$. 
\end{proof}


\begin{proof}[Proof of Lemma~\ref{lem:adding vertex}]
	
	We claim that 
	\begin{equation}
		\label{eq: m to b}
		|m_{i-1} m_i|+|m_i m_{i+1}| - |m_{i-1}m_{i+1}| < |m_{i-1} m_i|+|m_i b| - |m_{i-1}b|; 
	\end{equation}
	see Figure~\ref{fig:moving point}. This is equivalent to 
	\begin{equation*}
		|m_i m_{i+1}| + |m_{i-1}b| <|m_i b| + |m_{i-1}m_{i+1}|,
	\end{equation*}
    which follows from the fact that for any convex quadrilateral, the sum of the lengths of its diagonals exceeds the sum of the lengths of either pair of opposite sides. This fact holds by the triangle inequality applied to the two triangles formed by the intercepts of the diagonals and the corresponding side.
	
	
		\begin{center}
		\includegraphics{fig-poly-5}
		
		\f \label{fig:moving point}
		Illustration for the proof of Lemma~\ref{lem:adding vertex}.
	\end{center}	
	
	Analogously to \eqref{eq: m to b}, we have
	\begin{equation}
		\label{eq: m to a}
		|m_{i-1} m_i|+|m_i b| - |m_{i-1}b| < |a m_i|+|m_i b| - |a b|.
	\end{equation}
	
%
%
%

It remains to use the fact that $|am_i|+|m_ib|$ reaches its maximum if $m_i$ is the midpoint $m$ of the arc $\arc{ab}$: then \eqref{eq:small sum of 2} follows since $|am|+|mb| - |ab|=2 \sqrt{1 + k^2} - 2$. 

Indeed, if the maximum is attained at some other point $m'$, then the ellipse $E'$ with foci $a$ and $b$ and major axis of length $|am'|+|bm'|$ touches the half-ellipse $E_k$ (with semi-axes $1$ and $k$) at two points, namely, $m'$ and its symmetric image about the minor axis of $E_k$. By taking the symmetric image of $E_k$ about its major axis, we obtain the complete ellipse inscribed in $E'$ and touching it at four points, which is impossible for two conic curves.  
\end{proof}

The positive result here is that any fraction less than half of the perimeter can be reached by increasing the number of vertices.

\begin{theorem}
	\label{thm:1/2-e approximation}
	For any $\varepsilon>0$ there exists a positive integer $n$ such that for any convex shape $K$ one can choose $n$ extreme points that cannot be connected by a curve of length less than~$\frac{1-\varepsilon}{2} \per K$.
\end{theorem}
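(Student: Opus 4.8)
The plan is to reduce the statement to a purely geometric approximation result and then invoke the well-known bound (mentioned in the introduction) relating the length of a curve to the perimeter of its convex hull. Concretely, any path $\gamma$ satisfies $\length(\gamma) \ge \tfrac12 \per \conv(\gamma)$: traversing $\gamma$ forth and back produces a closed curve of length $2\length(\gamma)$ with the same convex hull, and the length of a closed curve is at least the perimeter of its convex hull. Since perimeter is monotone under inclusion of convex sets, if $\gamma$ passes through points $p_1, \dots, p_n$ then $\per\conv(\gamma) \ge \per\conv\{p_1,\dots,p_n\}$. Hence it suffices to prove the uniform claim: for every $\varepsilon > 0$ there is $n = n(\varepsilon)$ such that every convex shape $K$ has $n$ extreme points whose convex hull $Q$ satisfies $\per Q \ge (1-\varepsilon)\per K$. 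Any curve through these points then has length at least $\tfrac12 \per Q \ge \tfrac{1-\varepsilon}{2}\per K$.

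To produce such points I would select them by outer normal direction. The boundary $\partial K$ carries only countably many edges (maximal segments), hence only countably many edge-normal directions. I would therefore choose directions $\theta_1 < \dots < \theta_n$ in $[0, 2\pi)$, nearly equally spaced so that $\theta_{i+1} - \theta_i \le \tfrac{2\pi}{n} + \delta$ (cyclically), and avoiding every edge-normal. For such $\theta_i$ the face $F(\theta_i) = \argmax_{x \in K}\langle x, u_{\theta_i}\rangle$ is a single point $p_i$; being an exposed point, $p_i$ is extreme. Because the Gauss map is monotone along $\partial K$, the points $p_1, \dots, p_n$ occur in cyclic order and partition $\partial K$ into arcs $A_i$ (from $p_i$ to $p_{i+1}$) on which every outer normal lies in $[\theta_i, \theta_{i+1}]$.

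The heart of the argument is a chord-versus-arc estimate on each $A_i$. Writing $A_i$ in arc length as $\int e^{i\phi(s)}\,ds$ with tangent directions $\phi(s)$, the constraint on the normals forces all $\phi(s)$ to lie in an interval of length at most $\theta_{i+1}-\theta_i$. Projecting the chord onto the midray of this interval gives $|p_{i+1}-p_i| \ge \cos\!\big(\tfrac{\theta_{i+1}-\theta_i}{2}\big)\,\length(A_i)$. Since the $p_i$ are in convex position, $\per Q = \sum_i |p_{i+1}-p_i|$, and summing the estimate together with $\sum_i \length(A_i) = \per K$ yields $\per Q \ge \cos\!\big(\tfrac{\pi}{n}+\tfrac{\delta}{2}\big)\per K$. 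Choosing $n$ large and $\delta$ small so that $\cos(\tfrac{\pi}{n}+\tfrac{\delta}{2}) \ge 1-\varepsilon$ completes the claim, and with it the theorem.

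The step I expect to require the most care is the bookkeeping around extreme points and the Gauss map: justifying that avoiding edge-normals makes each $F(\theta_i)$ a single exposed (hence extreme) point, that the selected points are genuinely in convex position so that $\per Q = \sum_i|p_{i+1}-p_i|$, and that the outer normals along each arc $A_i$ stay confined to $[\theta_i,\theta_{i+1}]$ even in the presence of corners and of arcs that degenerate when two directions share a corner's normal cone. The chord-arc inequality itself, once the tangent directions are localized, is a one-line projection estimate, and the reduction to convex-hull perimeter is standard.
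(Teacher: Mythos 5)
Your proposal is correct and follows essentially the same route as the paper: select extreme points whose outer normals are (nearly) equally spaced, show that the inscribed polygon they span has perimeter at least $(1-\varepsilon)\per K$ via a per-arc chord-versus-arc estimate with constant $\cos(\pi/n)$, and finish with the folklore half-perimeter bound (Theorem~\ref{thm:half of perimeter}). The only deviations are technical: the paper proves the per-arc estimate using support lines and Lemma~\ref{lem:triangle fixed angle} rather than your tangent-direction projection, and it handles segment faces by taking an endpoint of the face (which is automatically extreme) instead of perturbing the directions to avoid edge normals.
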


For the proof we will need the following statement.
\begin{lemma}
	\label{lem:triangle fixed angle}
	For any triangle $abc$ with the angle $\angle bac = \varphi$, we have
	\begin{equation}
		\label{eq:sum of sides}
		\frac{|bc|}{|ab|+|ac|} \geq \sin \frac{\varphi}{2}.
	\end{equation}
\end{lemma}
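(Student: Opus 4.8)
The plan is to use the bisector of the angle at $a$ as an auxiliary line and to project the side $bc$ onto its perpendicular direction. Let $\ell$ be the bisector of $\angle bac$, so that each of the rays $ab$ and $ac$ makes the angle $\varphi/2$ with $\ell$, and the two rays lie on opposite sides of $\ell$. First I would record the perpendicular distances of $b$ and $c$ from $\ell$: dropping a perpendicular from $b$ onto $\ell$ gives the distance $|ab|\sin\frac{\varphi}{2}$, and likewise the distance from $c$ to $\ell$ equals $|ac|\sin\frac{\varphi}{2}$.

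The key step is the observation that, since $b$ and $c$ lie on opposite sides of $\ell$, the orthogonal projection of the segment $bc$ onto the direction perpendicular to $\ell$ has length exactly equal to the sum of these two distances, namely $(|ab|+|ac|)\sin\frac{\varphi}{2}$. As the length of any segment is at least the length of its projection onto any line, we obtain
\begin{equation*}
	|bc| \ge (|ab|+|ac|)\sin\frac{\varphi}{2},
\end{equation*}
which is precisely \eqref{eq:sum of sides}. Equality holds exactly when $bc$ is orthogonal to $\ell$, that is, when the triangle is isosceles with $|ab|=|ac|$.

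I do not anticipate a serious obstacle; the only point needing a little care is verifying that $b$ and $c$ genuinely lie on opposite sides of $\ell$ (so that the two perpendicular distances add rather than partially cancel), which holds because $\ell$ separates the two rays bounding the angle $\varphi\in(0,\pi)$, and because $\sin\frac{\varphi}{2}>0$ on this range.

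As a purely computational alternative, one can invoke the law of cosines: writing $p=|ab|$ and $q=|ac|$ and using $|bc|^2 = p^2+q^2-2pq\cos\varphi$ together with $\sin^2\frac{\varphi}{2}=\frac{1-\cos\varphi}{2}$, the desired inequality $|bc|^2 \ge (p+q)^2\sin^2\frac{\varphi}{2}$ rearranges to $\tfrac12(1+\cos\varphi)(p-q)^2 \ge 0$, which is manifestly true and again exhibits the isosceles equality case. I would present the projection argument as the main proof and mention this identity as a quick verification.
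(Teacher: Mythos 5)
Your proof is correct and takes essentially the same approach as the paper: the paper's proof likewise observes that $|ab|\sin\frac{\varphi}{2}$ and $|ac|\sin\frac{\varphi}{2}$ are the distances from $b$ and $c$ to the bisector of $\angle bac$, and that their sum is at most $|bc|$. Your projection argument merely spells out the last step (and your law-of-cosines check is a fine but redundant alternative).
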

\begin{proof}
	Note that $|ab|\sin \frac{\varphi}{2}$ and $|ac|\sin \frac{\varphi}{2}$ are the distances to the angle bisector of the angle $\angle bac$ from $b$ and $c$, respectively. The sum of these distances is at most $|bc|$.  
\end{proof}

\begin{proof}[Proof of Theorem~\ref{thm:1/2-e approximation}]
	Choose $n$ such that $\cos\frac{\pi}{n} > 1-\varepsilon$.
	For each $1 \leq i \leq n$, choose an extreme point $v_i$ of $K$ that admits a support line $\ell_i$ through it with the outer normal vector at the angle $2\pi\frac{i}{n}$ with some fixed direction (see Figure \ref{fig: circuminscribed construction}).	
	If there are two such points choose either of them; some extreme points can correspond to several~$i$.
	
	Now let us show that perimeter of the convex polygon $V=v_1v_2\dots v_{n}$ is at least \mbox{$(1-\varepsilon) \per K$}.
	Denote by $o_i$ the intersection of the support lines $\ell_i$ and $\ell_{i+1}$ (we assume that $\ell_{n+1}=\ell_1$ and $v_{n+1}=v_1$). 
	Note that the part of the perimeter of $K$ lying between $v_i$ and $v_{i+1}$ has length at most $|v_io_i|+|o_iv_{i+1}|$, which by Lemma~\ref{lem:triangle fixed angle} is at most $\frac{|v_iv_{i+1}|}{\cos \pi/n}$.
	Applying this inequality for all the arcs $\arc{v_iv_{i+1}}$ of the perimeter of $K$, we obtain the inequality $\per K \leq \frac{\per V}{\cos \pi/n}$. Therefore, $(1-\varepsilon)\per K < \per V$.
	
	By Theorem~\ref{thm:half of perimeter}, which is an independent statement presented below, the length of any curve passing through all vertices of $V$ should be at least $\frac{1}{2}\per V>\frac{1-\varepsilon}{2}\per K$.
\end{proof}

\begin{center}
	\includegraphics{fig-poly-11}
	
	\f 	\label{fig: circuminscribed construction}
	Illustration for the proof of Theorem~\ref{thm:1/2-e approximation}.
\end{center}

Note that in \cite{schneider1971zwei} R.~Schneider applied a similar construction for his solution of the problem of L.~Fejes T\'oth on the $n$-gon of the maximum (minimum) perimeter inscribed (resp., circumscribed) in a convex shape \cite[p. 39]{toth1953lagerungen}.

\subsection*{Barriers for convex shapes}
The problem of finding a shortest curve whose convex hull covers a unit disk was posed by H.~T.~Croft in \cite{croft1969curves} and solved by V.~Faber, J.~Mycielski, and P.~Pedersen in~\cite{faber1984ontheshortest}.
Following \cite{dumitrescu2014opaque}, let us call such a curve a \emph{barrier}.
Not much is known if instead of the unit disk we consider a general convex shape.
In \cite{faber1986theshortest} V.~Faber and J.~Mycielski give examples of plausibly optimal barriers for regular $n$-gons, $n\leq 6$, a halfdisk, and a certain parallelogram.

The following statement is widely known and even mentioned to be ``folklore.''
\begin{theorem}[See \cite{dumitrescu2014opaque} or \cite{faber1984ontheshortest}]
   \label{thm:half of perimeter}
    Let $\gamma$ be a curve such that its convex hull covers a planar convex shape $K$.
	Then 
\[\length \gamma \geq \frac12 \per K.\]
\end{theorem}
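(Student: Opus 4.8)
The plan is to prove the inequality by integral geometry, comparing $K$ and $\gamma$ through their one-dimensional orthogonal projections. Write $u_\theta=(\cos\theta,\sin\theta)$ and let $p_\theta(x)=\langle x,u_\theta\rangle$ be the projection of the plane onto the line spanned by $u_\theta$. For a convex shape $K$ denote by $w_K(\theta)=\max_{x\in K}p_\theta(x)-\min_{x\in K}p_\theta(x)$ its width in direction $\theta$. The starting point is the Crofton (Cauchy) formula for the perimeter,
\begin{equation*}
\per K=\int_0^\pi w_K(\theta)\,d\theta,
\end{equation*}
which is exactly \eqref{eq: Crofton}. Thus it suffices to produce a matching lower bound for $\length\gamma$ direction by direction.

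The key companion fact is an analogous representation of the length of $\gamma$. For each $\theta$ the scalar function $p_\theta\circ\gamma$ has a total variation, which I denote by $V_\theta(\gamma)$, and I claim that
\begin{equation*}
\length\gamma=\tfrac12\int_0^\pi V_\theta(\gamma)\,d\theta.
\end{equation*}
This rests on the elementary pointwise identity $\int_0^{2\pi}|\langle v,u_\theta\rangle|\,d\theta=4|v|$, valid for every $v\in\R^2$ (write $\langle v,u_\theta\rangle=|v|\cos(\theta-\varphi)$ and use $\int_0^{2\pi}|\cos|=4$). Parametrizing $\gamma$ by arc length and integrating this identity along the curve, so that $V_\theta(\gamma)=\int|\langle\gamma',u_\theta\rangle|$, yields $\int_0^{2\pi}V_\theta(\gamma)\,d\theta=4\length\gamma$, and the factor is halved after folding $\theta\mapsto\theta+\pi$ by the symmetry $V_{\theta+\pi}=V_\theta$.

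It then remains to bound $V_\theta(\gamma)$ from below by $w_K(\theta)$. Since $\gamma$ is connected, its projection $p_\theta\circ\gamma$ sweeps out an interval whose length is exactly the oscillation $\max p_\theta\circ\gamma-\min p_\theta\circ\gamma$, and the total variation of any function dominates its oscillation, so $V_\theta(\gamma)\ge w_{\conv\gamma}(\theta)$. The covering hypothesis $K\subseteq\conv\gamma$ gives $p_\theta(K)\subseteq p_\theta(\conv\gamma)=p_\theta(\gamma)$, hence $w_{\conv\gamma}(\theta)\ge w_K(\theta)$. Combining the three displays,
\begin{equation*}
\length\gamma=\tfrac12\int_0^\pi V_\theta(\gamma)\,d\theta\ge\tfrac12\int_0^\pi w_K(\theta)\,d\theta=\tfrac12\per K,
\end{equation*}
which is the assertion.

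I expect the only real technical point to be the length formula for a general rectifiable $\gamma$. Rather than differentiating an arc-length parametrization directly, the cleanest route is to verify $\int_0^{2\pi}V_\theta=4\cdot(\text{length})$ first for a single segment, where a segment of length $\ell$ in direction $\varphi$ has $V_\theta=\ell\,|\cos(\theta-\varphi)|$, then sum over edges to obtain it for polygonal paths, and finally pass to the limit along inscribed polygons, whose lengths increase to $\length\gamma$ and whose projections approximate those of $\gamma$. The width comparison and the oscillation bound, by contrast, are elementary and require no regularity of $\gamma$.
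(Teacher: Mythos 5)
Your proof is correct, but note that the paper itself does not prove Theorem~\ref{thm:half of perimeter}: it is stated as a known (``folklore'') result with references to \cite{dumitrescu2014opaque} and \cite{faber1984ontheshortest}, so the only internal point of comparison is the paper's proof of Theorem~\ref{thm:length of curve}, which uses the same circle of ideas. The comparison is instructive. The paper works with the line-counting form of the Crofton formula \eqref{eq: Crofton}; there one must pass to a closed auxiliary curve $\gamma'$ so that almost every line meeting $K$ meets $\gamma'$ at least twice, matching the two intersections with $\partial K$ and producing the full perimeter. You instead use the projection (Cauchy) form: $\per K=\int_0^\pi w_K(\theta)\,d\theta$ (which agrees with \eqref{eq: Crofton} once one notes $n_{\partial K}=2$ for a.e.\ line meeting $K$) together with $\length\gamma=\frac12\int_0^\pi V_\theta(\gamma)\,d\theta$ and the pointwise bound $V_\theta(\gamma)\ge w_{\conv\gamma}(\theta)\ge w_K(\theta)$, where connectedness of $\gamma$ gives the oscillation bound. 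The two formulations are equivalent---by Banach's indicatrix theorem, $V_\theta(\gamma)=\int n_\gamma(\theta,p)\,dp$---and in the line-counting language your key inequality says that a.e.\ line meeting $K$ meets the connected set $\gamma$ at least once (if a line missed $\gamma$, the connected curve would lie in one open half-plane and so would its convex hull), i.e.\ $n_\gamma\ge\frac12 n_{\partial K}$ a.e.; this is exactly where the factor $\frac12$ arises and why, unlike in the proof of Theorem~\ref{thm:length of curve}, no closed curve is needed. Your treatment of the one technical point (the length identity for rectifiable $\gamma$ via inscribed polygons, with partition sums increasing to $V_\theta$ by continuity and monotone convergence) is sound; an equally clean alternative is to parametrize by arc length, observe that each projection is then $1$-Lipschitz and hence absolutely continuous, and conclude by Fubini's theorem.
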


Let us prove a similar statement.

\begin{theorem}
   \label{thm:length of curve}
     Let $K$ be a convex shape on a plane, and let $\gamma$ be a curve passing through all extreme points of $K$. Then
	\[
	\length \gamma \geq \per K - \diam K.
	\]
\end{theorem}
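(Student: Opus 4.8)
The plan is to reduce to the case $\gamma\subseteq K$ and then close $\gamma$ up into a loop whose convex hull is exactly $K$, paying only the length of a single chord. The point is that a closed curve must ``wrap around'' its convex hull, so its length is at least the hull's perimeter; the only thing to control is how much length it costs to close $\gamma$ up, and this cost will be at most $\diam K$.

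First I would replace $\gamma$ by its image under the nearest-point (metric) projection onto $K$. This map is nonexpansive, so it does not increase $\length\gamma$, and it fixes every point of $K$, in particular every extreme point. Hence the projected curve still passes through all extreme points and lies inside $K$, so its convex hull, containing all extreme points, equals $K$ because a compact convex set is the convex hull of its extreme points. Thus it suffices to prove the bound under the extra assumptions $\gamma\subseteq K$ and $\conv\gamma=K$; in particular the endpoints $p,q$ of $\gamma$ now satisfy $|pq|\le\diam K$. Next I would append the segment $[q,p]$ to $\gamma$, obtaining a closed curve $\tilde\gamma$ of length $\length\gamma+|pq|$. Since $p,q\in\gamma$, adjoining this chord does not enlarge the convex hull, so $\conv\tilde\gamma=\conv\gamma=K$. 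Invoking the fact that the length of a closed curve is at least the perimeter of its convex hull gives $\length\gamma+|pq|\ge\per K$, whence $\length\gamma\ge\per K-|pq|\ge\per K-\diam K$.

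It remains to justify the closed-curve inequality, which is also the engine behind Theorem~\ref{thm:half of perimeter}. I would deduce it from the Cauchy--Crofton formula: writing $\length\gamma=\tfrac12\int_0^\pi \mathrm{TV}(\pi_\theta\circ\gamma)\,d\theta$ for the total variations of the one-dimensional projections $\pi_\theta\circ\gamma$ of $\gamma$, one notes that a closed path on a line returns to its start and therefore has total variation at least twice the length of its range. Since the range of $\pi_\theta\circ\tilde\gamma$ contains the projection of $K$, we get $\mathrm{TV}(\pi_\theta\circ\tilde\gamma)\ge 2w_K(\theta)$, where $w_K(\theta)$ is the width of $K$ in direction $\theta$; integrating and using Cauchy's formula $\per K=\int_0^\pi w_K(\theta)\,d\theta$ yields $\length\tilde\gamma\ge\per K$. (Alternatively, one can run this Crofton computation directly on the open curve $\gamma$, using that a path on a line from $s$ to $e$ covering an interval of length $R$ has total variation at least $2R-|e-s|$, and that $\int_0^\pi|\pi_\theta p-\pi_\theta q|\,d\theta=2|pq|$; this reproduces the same bound without the closing step.)

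The main obstacle is the bookkeeping that keeps the cost of closing the curve at most $\diam K$: one must guarantee that the endpoints lie in $K$, which is exactly what the metric-projection reduction secures, and one must verify that this projection neither lengthens $\gamma$ nor discards any extreme point. Once these are in place the argument is short. As a sanity check, the thin $1\times L$ rectangle---whose shortest path through the four corners has length $L+2\approx\per K-\diam K$---shows the bound is essentially tight, so no slicker estimate can do better in general.
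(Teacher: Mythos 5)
Your proof is correct and follows essentially the same route as the paper: close the curve into a loop by adding a chord of length at most $\diam K$ between points of $K$, then show via a Crofton-type integral-geometric argument that a closed curve whose convex hull covers $K$ has length at least $\per K$. The only differences are technical conveniences — the paper truncates $\gamma$ at its first and last intersections with $\partial K$ (rather than applying the nearest-point projection) to get chord endpoints inside $K$, and it phrases the Crofton step by counting line intersections rather than through widths and total variation of projections.
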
	

\begin{proof}
	Let $a$ and $b$ be the first and the last points (with respect to any parametrization of~$\gamma$) of intersection of~$\gamma$ and $\partial K$; see Figure~\ref{fig: gamma}.
	Define the closed curve $\gamma'$ formed by the part of $\gamma$ between points $a$ and $b$ and the line segment $ab$.
	Since $\gamma'$ passes through all the extreme points of $K$, its convex hull covers $K$.
	Since $|ab|\leq \diam K$, the claim will follow if we show that the length of $\gamma'$ is at least~$\per K$.

\begin{center}
	\includegraphics{fig-poly-10}
	
	\f 	\label{fig: gamma}
	Illustration for the proof of Theorem~\ref{thm:length of curve}.
\end{center}	
	
	Let us use the Crofton formula from integral geometry (see e.g.,~S.~Tabachnikov \cite{Tabachnikov2005}):
	\begin{equation} \label{eq: Crofton}
		 \length (\gamma') = \frac12 \iint \limits_{\mathbb{S}^1 \mathbb{R_+}}n_{\gamma'}(\phi, p) d\phi dp,
		 \,\,\,\,\,\,\,
		 \per (K) = \frac12 \iint \limits_{\mathbb{S}^1 \mathbb{R_+}} n_{\partial K}(\phi, p) d\phi dp, 
	\end{equation}
where $n_\nu(\phi, p)$ denotes the number of intersections of a curve $\nu$ with the line perpendicular to the direction $\phi$ passing at the distance $p$ from the origin.
We have that $n_{\gamma'}(\phi, p) \geq n_{\partial K}(\phi, p)$ for almost every pair $(\phi, p)$.
Indeed, each line intersecting $K$ intersects $\gamma'$.
Since $\gamma'$ is closed, almost every line that intersects it has at least two points of intersection with $\gamma'$, while almost every line that intersects $\partial K$ has exactly two points of intersection with $\partial K$ since~$K$ is convex.
\end{proof}

  The authors believe that the following generalization is true.
\begin{conjecture*} \label{con:length of curve}
    Let $\gamma$ be a curve such that its convex hull covers a planar convex shape $K$.
	Then 
	\[
	\length \gamma \geq \per K - \diam K.
	\]
\end{conjecture*}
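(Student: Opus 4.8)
The plan is to upgrade the proof of Theorem~\ref{thm:length of curve} so that it applies to any curve $\gamma$ whose convex hull covers $K$, not merely one passing through every extreme point. The difficulty is that when $\gamma$ only has the covering property, it may fail to intersect $\partial K$ at all (it could lie entirely inside $K$), so the clean trick of closing up $\gamma$ with a single chord $ab$ through two boundary intersection points is no longer available. The Crofton comparison $n_{\gamma'}(\phi,p)\ge n_{\partial K}(\phi,p)$ also breaks down directly, since a curve strictly inside $K$ need not meet every line that meets $\partial K$. What does survive is the covering hypothesis: every supporting line of $K$ touches $\co \gamma = \co K$, and more usefully, for every direction the extreme points of $K$ in that direction must be approximated by points of $\gamma$.

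First I would reduce to the case where $K$ is a convex polygon $P$ by approximation, exactly as in Corollary~\ref{cor::no approximation}; if the inequality holds for all polygons then a limiting argument recovers it for general $K$, since both $\per$ and $\diam$ are continuous under Hausdorff convergence and length is lower semicontinuous. For a polygon, the convex hull of $\gamma$ covers $P$ if and only if $\gamma$ passes within $\co\gamma$ arbitrarily close to each vertex; in fact $\co\gamma\supseteq P$ forces every vertex $v_i$ of $P$ to lie in $\co\gamma$, and I would next argue that one may replace $\gamma$ by a (possibly shorter) curve that genuinely visits each vertex. This is the delicate point: a vertex can lie in $\co\gamma$ while being a convex combination of points of $\gamma$ rather than on $\gamma$ itself. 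So the real task is to produce, from an arbitrary covering curve, a related closed curve $\gamma'$ of length at most $\length\gamma+\diam K$ whose Crofton count dominates that of $\partial K$.

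The key step I expect to carry the proof is the following closing construction. Let $a$ and $b$ be points of $\gamma$ realizing the diameter of $\co\gamma$ (so $|ab|=\diam\co\gamma\le\diam K$ only after we also control the hull diameter; here one must be careful, as $\diam\co\gamma$ could exceed $\diam K$). Because of this subtlety I would instead take $a,b$ to be the two endpoints of $\gamma$ when $\gamma$ is an arc, form the closed curve $\gamma'=\gamma\cup[ab]$, and then prove the Crofton domination $n_{\gamma'}(\phi,p)\ge n_{\partial K}(\phi,p)$ for almost every $(\phi,p)$ using the covering hypothesis: a line meeting the interior of $K$ separates two boundary points of $P$ that both lie in $\co\gamma'$, and a connectivity/separation argument shows such a line must cross the closed curve $\gamma'$ at least twice. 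This yields $\length\gamma+|ab|=\length\gamma'\ge\per K$, hence $\length\gamma\ge\per K-|ab|\ge\per K-\diam K$ provided $|ab|\le\diam K$.

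The main obstacle, and the reason the statement is posed only as a conjecture, is precisely controlling the closing segment: there is no guarantee that the natural endpoints (or diameter-realizing points) of a covering curve are at distance at most $\diam K$ from each other, nor that closing with a single segment produces enough extra Crofton count to dominate $\partial K$ along every direction. A curve can wander far outside any convenient chord while still just barely covering $K$, so the bookkeeping $\length\gamma'\ge\per K$ may require closing $\gamma$ with a more elaborate path than a single segment, which in turn inflates the deficit beyond $\diam K$. I would attack this by trying to show that among all covering curves of minimal length one may always assume the endpoints are boundary points of $K$ at mutual distance at most $\diam K$ (an extremality/variational argument), reducing the general case to the situation already handled in Theorem~\ref{thm:length of curve}; establishing that reduction rigorously is where I expect the essential work to lie.
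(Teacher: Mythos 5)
The statement you are trying to prove is not proved in the paper at all: it is posed there as an open conjecture, and the authors explicitly remark that the proof of Theorem~\ref{thm:length of curve} does not work when the hull of $\gamma$ merely covers $K$ and the distance between the endpoints of $\gamma$ exceeds $\diam K$. Your proposal essentially reconstructs that remark rather than overcoming it. The sound part of your plan is the Crofton step: if $a,b$ are the endpoints of $\gamma$ and $\gamma'=\gamma\cup[ab]$, then since $\co\gamma\supseteq K$, every line meeting $\intr K$ has points of $\gamma$ strictly on both sides (otherwise $\co\gamma$ would lie in a closed half-plane that misses part of $K$), hence meets $\gamma$; as $\gamma'$ is closed, almost every such line meets $\gamma'$ at least twice, and the Crofton formula gives $\length\gamma+|ab|\geq\per K$. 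Note that this makes your polygonal-approximation step and the worry about forcing $\gamma$ to visit vertices unnecessary: the separation argument never requires $\gamma$ to pass through any extreme point of $K$.

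The genuine gap is the one you name yourself in your last paragraph: the bound obtained is $\length\gamma\geq\per K-|ab|$, and nothing in your argument bounds $|ab|$ by $\diam K$. A covering curve need not lie inside $K$, so its endpoints can be at distance arbitrarily larger than $\diam K$, and closing it with such a chord destroys the inequality. The proposed repair --- a variational argument showing that some minimal-length covering curve has endpoints on $\partial K$ at mutual distance at most $\diam K$ --- is asserted but not even sketched, and it is precisely the missing mathematical content: it would require structural information about optimal barriers, which is unknown even for simple shapes (the paper cites only plausible, unproven optimal barriers for regular polygons and a half-disk). Two smaller slips: a curve with $\co\gamma\supseteq K$ cannot lie entirely in the interior of $K$, since the convex hull of a subset of $\intr K$ stays in $\intr K$; and your claim $\co\gamma=\co K$ is false in general, as only the inclusion $\co\gamma\supseteq K$ is given. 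In summary, your write-up correctly maps the terrain and correctly identifies why the statement is hard, but it does not prove it; it ends exactly where the open problem begins.
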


Note that the proof of Theorem~\ref{thm:length of curve} does not work if the convex hull of $\gamma$ does not cover~$K$ or if the distance between the endpoints is greater than $\diam K$.

\begin{center}
	\includegraphics{fig-poly-8}
	
	\f 	\label{fig: two quadrileterals}
	Example showing that $\per K - \diam K$ is not inclusion monotone.
\end{center}

It looks plausible that the function $\per K - \diam K$ is inclusion monotone.
But this is not true.
A counterexample is shown in Figure~\ref{fig: two quadrileterals}.
Let $abcd$ and $abcd'$ be deltoids containing an equilateral triangle $abc$ of the side length $1$, with their axes of respective lengths $1$ (the angle~$d$ equals $150^\circ$) and $2/\sqrt{3}$ (the angle $d'$ equals $120^\circ$).
The diameters of the deltoids are $bd$ and $bd'$. Then the value of $\per K-\diam K$ equals $(2+4\sin 15^\circ)-1\approx 2.035$ for the quadrilateral $abcd$ and $(2+2/\sqrt3)-2/\sqrt3=2$ for $abcd'$.

\subsection*{Acknowledgments} We wish to thank the three anonymous reviewers of the journal ``The American Mathematical Monthly'' for their comments and many suggestions for improving the paper.


\vskip 1cm
\end{document}